\numberwithin{equation}{section}
\newtheorem{thm}{Theorem}[section]
\newtheorem{lem}[thm]{Lemma}
\newtheorem{defin}[thm]{Definition}
\begin{document}

\begin{center}
\textbf{{\large {\ On the Cauchy problem for the  Langevin-type fractional equation }}}\\[0pt]
\medskip \textbf{Fayziev Yusuf $^{1},^{3}$, Jumaeva Shakhnoza $^{2}$}\\[0pt]
\textit{shahnozafarhodovna79@gmail.com\\[0pt]}
\medskip \textit{\ $^{1}$ National University of Uzbekistan, Tashkent, Uzbekistan; \\ $^{2}$ V.~I.~Romanovskiy Institute of Mathematics, Uzbekistan Academy of Sciences,
Tashkent, Uzbekistan; \\ Karshi State University, Karshi, Uzbekistan }
\end{center}

\textbf{Abstract}: In this article, the Cauchy problem for the Langevin-type time-fractional equation $D_t^\beta(D_t^\alpha u(t))+D_t^\beta(Au(t))=f(t),(0<t\leq T)$  is studied. Here $\alpha,\beta \in(0,1)$, $D_t^\alpha, D_t^\beta$ is the Caputo derivative and $A$ is an unbounded self-adjoint operator in a separable Hilbert space. Under certain conditions, we establish the existence and uniqueness of the solution and provide an explicit representation of it using eigenfunction expansions.

\vskip 0.3cm \noindent {\it AMS 2000 Mathematics Subject
Classifications} :
Primary 35R11; Secondary 34A12.\\
{\it Key words}:  Cauchy problem;  Langevin-type fractional equation; Caputo fractional derivative.

\section{Introduction}

The Langevin equation is an important mathematical physics equation used to model phenomena occurring in fluctuating environments such as Brownian motion (\cite{Lang},\cite{Coffey}). The classical form of this equation was derived in terms of ordinary derivatives by P. Langevin (Paul Langevin, 1872-1946, Paris) in \cite{Lang}. The Langevin equation has enormous applications, that is, cell migration in biology \cite{Schien}; polymer and protein dynamics in chemistry (\cite{Panja},\cite{Schluttig}); signal processing with noise, diamagnetics in electrical engineering (\cite{Coffey}, \cite{Ahmadova},\cite{Lisy}); modeling stock price dynamics, market crash analysis in finance (\cite{Bouchaud},\cite{Farahpour}).

With the intensive development of fractional derivatives, a natural generalization of the Langevin equation is to replace the ordinary derivative with a fractional derivative to yield a fractional Langevin equation, which can be considered a particular case of the generalized Langevin equation. Mainardi introduced the fractional Langevin equation \cite{Mainardi} in the early 1990s. Many different types of  Langevin equations were studied in \cite{Burov}–\cite{AhmadAlsaedi}. The usual fractional Langevin equation involving only one fractional order was studied in \cite{Lutz},\cite{Burov}, the Langevin equation containing both a frictional memory kernel and a fractional derivative was studied in \cite{Fa}, the nonlinear Langevin equation involving two fractional orders was studied in \cite{LimLi}–\cite{AhmadAlsaedi}. The vast majority of these articles determined the existence and uniqueness of Langevin equation solutions, and some promising results have been obtained using the Banach contraction principle, Krasnoselskii's fixed point theorem, Schauder's fixed point theorem, Leray-Schauder nonlinear alternative, Leray-Schauder degree, and other techniques.

Let $H$ be a separable Hilbert space and $A: D(A) \rightarrow H$ be an arbitrary unbounded positive self-adjoint operator with the domain of definition $D(A)$. We assume that the operator $A$ has a complete orthonormal system of eigenfunctions $\{v_k\}$ and a countable set of positive eigenvalues $\lambda_k:0<\lambda_1 \leq \lambda_2...\rightarrow +\infty$. The sequence $\{\lambda_k\}$ has no finite limit points.

Let $C((a,b); H)$   be the set of continuous vector-valued functions $y(t)$ on  $t\in (a,b)$ with values in $H$.

Let $AC[0, T]$ be the set of absolutely continuous functions defined on $[0, T]$ and let $AC([0, T]; H)$ stand for a space of absolutely continuous functions $y(t)$ with values in $H$ (see, \cite{Fomin} p.339).

The definitions of fractional integrals and derivatives for the function $h:\mathbb{R}_+ \rightarrow H$ are discussed in detail in \cite{Lizama}. The fractional integral of order $\sigma$ for a function $h(t)$ defined on $\mathbb{R}_+$ is given by:

$$
I_t^\sigma h(t)=\frac{1}{\Gamma(\sigma)} \int_0^t \frac{h(\xi)}{(t-\xi)^{1-\sigma}} d\xi, \quad t>0,
$$
where $\Gamma(\sigma)$ is the Euler gamma function. Using this definition, the Caputo fractional derivative of order $\rho \in (0,1)$ can be defined as:
$$
D_t^\rho h(t)=I_t^{1-\rho} \frac{d}{dt}h(t).
$$

In this article, we consider the following Cauchy problem for a  Langevin-type  fractional differential equation:
\begin{equation}\label{eq2.1}
 \begin{cases}
    D_{t}^{\beta}(D_t^{\alpha} u(t)) +D_t^{\beta} (A u(t))=f(t), \quad 0<t \leq T,
\\
    u(+0)=\varphi,
\\
D_t^{\alpha} u(+0)=\psi,
\end{cases}
\end{equation}where $0<\alpha<1$, $0<\beta<1$; $\varphi,\psi \in H$ and   $f(t) \in C([0,T];H)$.

\begin{defin}\label{defsol} A function  $u(t)\in AC([0,T];H)$  is called a solution of problem  \eqref{eq2.1} if $D_t^\beta (A u(t))$, $D_t^\beta(D_t^\alpha u(t)) \in C((0,T];H)$, $D_t^\alpha u(t) \in C([0,T];H)$ and $u(t)$ satisfies all conditions of problem \eqref{eq2.1}.
\end{defin}

In the case $\beta=0$, the Langevin-type fractional equation with two different orders coincides with the fractional subdiffusion equation. The forward and inverse problems for the fractional subdiffusion equation have been studied in (\cite{AlimovAshurov}-\cite{AshurovFayziev2}).

If $\alpha=\beta$, then a fractional Langevin equation with two fractional orders can coincide with the fractional telegraph equation under certain conditions. The non-local and inverse problems for the fractional telegraph equation are studied in (\cite{AshurovSaparbayev1}-\cite{AshurovSaparbayev3}).

In this article, we prove the following theorem:

\begin{thm} \label{theorem1}

Let $\varphi \in D(A)$, $\psi \in H$. Further, let $ 0<\varepsilon<1$ be any fixed number and   $f(t) \in C([0;T];D(A^\varepsilon))$. Then, problem \eqref{eq2.1} has a unique solution given by:
$$
    u(t)=\sum_{k=1}^\infty \bigg[\varphi_k E_{\alpha,1}(-\lambda_k t^\alpha)+[\psi_k+\lambda_k \varphi_k]t^\alpha E_{\alpha,\alpha+1}(-\lambda_k t^\alpha)+
$$
\begin{equation}\label{sol1}
    + \int_0^t (t-\eta)^{\alpha+\beta-1} E_{\alpha,\alpha+\beta}(-\lambda_k (t-\eta)^\alpha)f_k(\eta)  d\eta \bigg] v_k,
\end{equation}
where  $f_k(t),\varphi_k$ and $\psi_k$ are the Fourier coefficients of the elements  $f(t)$,  $\varphi$ and $\psi$, respectively.
\end{thm}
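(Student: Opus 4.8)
The plan is to use the Fourier method with respect to the eigenbasis $\{v_k\}$ of $A$. I would look for $u(t)=\sum_k u_k(t)v_k$ with $u_k(t)=(u(t),v_k)$. Since $A$ is self-adjoint with $Av_k=\lambda_k v_k$, and since taking the $k$-th Fourier coefficient is a bounded linear functional that can be carried under the integrals defining $I_t^\sigma$ and $D_t^\rho$, problem \eqref{eq2.1} is (formally) equivalent to the countable family of scalar Cauchy problems $D_t^\beta(D_t^\alpha u_k)+\lambda_k D_t^\beta u_k=f_k$, $u_k(0)=\varphi_k$, $D_t^\alpha u_k(0)=\psi_k$. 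By linearity of the Caputo derivative the equation is $D_t^\beta(D_t^\alpha u_k+\lambda_k u_k)=f_k$; applying $I_t^\beta$ and using $I_t^\beta D_t^\beta g=g-g(0)$ together with $D_t^\alpha u_k(0)+\lambda_k u_k(0)=\psi_k+\lambda_k\varphi_k$ reduces each scalar problem to the equation of order $\alpha$
\[
D_t^\alpha u_k(t)+\lambda_k u_k(t)=\psi_k+\lambda_k\varphi_k+I_t^\beta f_k(t),\qquad u_k(0)=\varphi_k .
\]

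Next I would solve this by variation of parameters,
\[
u_k(t)=\varphi_k E_{\alpha,1}(-\lambda_k t^\alpha)+\int_0^t(t-s)^{\alpha-1}E_{\alpha,\alpha}\bigl(-\lambda_k(t-s)^\alpha\bigr)\,g_k(s)\,ds,\qquad g_k=\psi_k+\lambda_k\varphi_k+I_t^\beta f_k,
\]
and then simplify. Splitting $g_k$ into the constant $\psi_k+\lambda_k\varphi_k$ and the part $I_t^\beta f_k$ and using the identities $\int_0^t s^{\alpha-1}E_{\alpha,\alpha}(-\lambda s^\alpha)\,ds=t^\alpha E_{\alpha,\alpha+1}(-\lambda t^\alpha)$ and $\bigl[t^{\alpha-1}E_{\alpha,\alpha}(-\lambda t^\alpha)\bigr]*\tfrac{t^{\beta-1}}{\Gamma(\beta)}=t^{\alpha+\beta-1}E_{\alpha,\alpha+\beta}(-\lambda t^\alpha)$ (the latter checked by Laplace transform, both sides having transform $p^{-\beta}(p^\alpha+\lambda)^{-1}$), these two contributions collapse exactly into the second and third terms of \eqref{sol1}, producing the candidate \eqref{sol1}. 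Uniqueness then follows by running the same reduction on the difference $w=u-\tilde u$ of two solutions: $w_k$ solves $D_t^\alpha w_k+\lambda_k w_k=0$ with $w_k(0)=0$, so $w_k\equiv0$ for every $k$ and $w\equiv0$.

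The substantive part is to verify that \eqref{sol1} is a genuine solution in the sense of Definition \ref{defsol}. Everything here rests on the Mittag--Leffler decay bound $|E_{\alpha,\mu}(-x)|\le C_\mu/(1+x)$ for $x\ge0$ (valid for all orders $\mu\in\{1,\alpha,\alpha+1,\alpha+\beta,\alpha+1-\beta\}$ that appear), on $t^\alpha E_{\alpha,\alpha+1}(-\lambda t^\alpha)=\lambda^{-1}(1-E_{\alpha,1}(-\lambda t^\alpha))$, and on $D_t^\beta E_{\alpha,1}(-\lambda t^\alpha)=-\lambda t^{\alpha-\beta}E_{\alpha,\alpha+1-\beta}(-\lambda t^\alpha)$. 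Using $\varphi\in D(A)$ and $\psi\in H$, the $\varphi_k$- and $\psi_k$-terms of \eqref{sol1} and of its images under $A$, $D_t^\alpha$, $D_t^\beta$ are dominated by $\lambda_k|\varphi_k|$ or $|\psi_k|+\lambda_k|\varphi_k|$, which are square-summable; this gives uniform convergence on $[0,T]$ of the series for $u$ and $D_t^\alpha u$, and uniform convergence on each $[\delta,T]$, $\delta>0$ (with a possible $t^{-\beta}$ singularity at $t=0$), of the series for $D_t^\beta(Au)$ and $D_t^\beta(D_t^\alpha u)$. For the $f$-term I would write its $k$-th summand as $I_t^\beta$ applied to $\bigl[t^{\alpha-1}E_{\alpha,\alpha}(-\lambda_k t^\alpha)\bigr]*f_k$, so that its $D_t^\beta$ equals $\int_0^t(t-\eta)^{\alpha-1}E_{\alpha,\alpha}(-\lambda_k(t-\eta)^\alpha)f_k(\eta)\,d\eta$; bounding $\frac{\lambda_k(t-\eta)^\alpha}{1+\lambda_k(t-\eta)^\alpha}\le(\lambda_k(t-\eta)^\alpha)^\varepsilon$ shows $\lambda_k$ times this integral is at most $C\lambda_k^\varepsilon\int_0^t(t-\eta)^{\alpha\varepsilon-1}|f_k(\eta)|\,d\eta$, and the gained power $\lambda_k^\varepsilon$ is absorbed exactly because $f(t)\in D(A^\varepsilon)$. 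After Cauchy--Schwarz the $k$-tail is controlled by $\int_0^t(t-\eta)^{\alpha\varepsilon-1}\bigl(\sum_{k\ge N}\lambda_k^{2\varepsilon}|f_k(\eta)|^2\bigr)d\eta$; since $\eta\mapsto\sum_k\lambda_k^{2\varepsilon}|f_k(\eta)|^2$ is continuous with tails decreasing to $0$, Dini's theorem upgrades this to uniform convergence in $t$. The same scheme handles the $f$-contributions to $Au$, $D_t^\alpha u$ and $D_t^\beta(Au)$ (hence $D_t^\beta(D_t^\alpha u)=f-D_t^\beta(Au)\in C((0,T];H)$), and --- with $u_k'$ in place of $D_t^\beta u_k$, integrated over $[0,T]$ --- the membership $u\in AC([0,T];H)$; the Cauchy conditions $u(+0)=\varphi$, $D_t^\alpha u(+0)=\psi$ are then read off by letting $t\to+0$ in the convergent series.

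I expect the main obstacle to be precisely this verification step: justifying that $A$ and $D_t^\beta$ may be applied to the series term by term and that the resulting series converge in the topologies demanded by Definition \ref{defsol}. The delicate point is that the terms forming $D_t^\beta(Au)$ carry an extra factor $\lambda_k$ relative to $u$ itself, and only the hypothesis $f\in C([0,T];D(A^\varepsilon))$ --- used through the interpolation estimate $x/(1+x)\le x^\varepsilon$ on $[0,\infty)$ for the Mittag--Leffler kernels --- compensates for it; moreover the passage from pointwise to genuinely uniform convergence hinges on the Dini-type argument for $\sum_k\lambda_k^{2\varepsilon}|f_k(\eta)|^2$.
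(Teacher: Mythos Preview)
Your proposal is correct and follows the same architecture as the paper: Fourier expansion in the eigenbasis of $A$, solution of the resulting scalar Cauchy problems, uniqueness via the homogeneous problem, and then term-by-term justification of the series using the bound $|E_{\alpha,\mu}(-x)|\le C/(1+x)$ together with the interpolation $x/(1+x)\le x^{\varepsilon}$ to absorb the extra factor $\lambda_k$ in $D_t^\beta(Au)$ (the paper packages these as Lemmas~\ref{Lemma1}, \ref{Lemma2} and~\ref{Lemma6}). Two places where your execution differs in detail but not in spirit: (i) you obtain the scalar formula by reducing with $I_t^\beta$ and then applying the variation-of-parameters solution of $D_t^\alpha y+\lambda y=g$, collapsing the $I_t^\beta f_k$ contribution via the Laplace identity $[t^{\alpha-1}E_{\alpha,\alpha}(-\lambda t^\alpha)]*\tfrac{t^{\beta-1}}{\Gamma(\beta)}=t^{\alpha+\beta-1}E_{\alpha,\alpha+\beta}(-\lambda t^\alpha)$, whereas the paper (Lemma~\ref{Lemma5}) reaches the same formula by Picard iteration after the same $I_t^\beta$ reduction; (ii) to compute $D_t^\beta$ of the $f$-term you simply observe that it equals $I_t^\beta$ of $\int_0^t(t-\eta)^{\alpha-1}E_{\alpha,\alpha}(-\lambda_k(t-\eta)^\alpha)f_k(\eta)\,d\eta$, so $D_t^\beta$ undoes $I_t^\beta$ in one stroke, while the paper arrives at the identical integrand through two rounds of integration by parts. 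For the uniform-in-$t$ control of the $f$-tails you invoke Dini's theorem on $\sum_{k\ge N}\lambda_k^{2\varepsilon}|f_k(\eta)|^2$; the paper instead uses the generalized Minkowski inequality inside Lemma~\ref{Lemma6}. Both routes yield the same estimates, and your shortcuts (i)--(ii) are somewhat more economical.
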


\section{Preliminaries}

In this section, we present several pieces of data about the Mittag-Leffler functions, which we will use below.

 Let $\varepsilon$ be an arbitrary real number. The power of the operator
$A$ is defined by the following:
$$
A^\varepsilon h=\sum_{k=1}^\infty \lambda_k^\varepsilon h_k v_k,
$$
where $h_k$ is the Fourier coefficient of the function $h \in H$, i.e., $h_k=(h,v_k)$. The domain of this operator is defined as:
$$
D(A^\varepsilon)=\{h \in H:\sum_{k=1}^\infty \lambda_k^{2\varepsilon} |h_k|^2<\infty\}.
$$
For elements of $D(A^{\varepsilon})$ we introduce the norm
$$
||h||^2_\varepsilon =\sum\limits_{k=1}^\infty \lambda_k^{2\varepsilon} |h_k|^2.
$$

The function $$E_{\alpha,\mu} (z)=\sum_{n=0}^\infty \frac{z^n}{\Gamma(\alpha n+\mu)}$$  is called the Mittag-Leffler function with two parameters( see \cite{Dzha}, p 134), where  $0<\alpha<1$, $\mu \in \mathbb{C}$.

We present some asymptotic estimates for the Mittag-Leffler function:

\begin{lem}\label{Lemma1}
Let $0<\alpha<1$ and $ \mu \in \mathbb{C}$. For any $t\geq 0$ one has (see \cite{Dzha}, p. 136)
$$
|E_{\alpha, \mu}(-t)|\leq \frac{C}{1+t},
$$
where  constant $C$  doesn't depend on $t$ and $\alpha$.
\end{lem}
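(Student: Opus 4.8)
The plan is to obtain an explicit real integral representation of $E_{\alpha,\mu}(-t)$ on the negative semi-axis and then estimate it directly. I would start from Hankel's contour integral for the reciprocal Gamma function, $\frac{1}{\Gamma(s)}=\frac{1}{2\pi i}\int_{\mathrm{Ha}}e^{w}w^{-s}\,dw$, substitute it term by term into the series $E_{\alpha,\mu}(z)=\sum_{n\ge 0}z^{n}/\Gamma(\alpha n+\mu)$, and sum the geometric series $\sum_{n}(z w^{-\alpha})^{n}=w^{\alpha}/(w^{\alpha}-z)$ (valid once $\mathrm{Ha}$ is taken with $|w|$ large), which yields
\[
E_{\alpha,\mu}(z)=\frac{1}{2\pi i}\int_{\mathrm{Ha}}\frac{e^{w}\,w^{\alpha-\mu}}{w^{\alpha}-z}\,dw .
\]
Specializing to $z=-t$ with $t>0$ and collapsing the contour onto the branch cut along the negative real axis (writing $w=re^{\pm i\pi}$ on the two banks and combining them) gives, at least for $\mathrm{Re}\,\mu$ in the range where the integral converges and then for all $\mu$ by analytic continuation,
\[
E_{\alpha,\mu}(-t)=\frac{1}{\pi}\int_{0}^{\infty}e^{-r}\,r^{\alpha-\mu}\,
\frac{r^{\alpha}\sin(\pi\mu)-t\sin\bigl(\pi(\alpha-\mu)\bigr)}{r^{2\alpha}+2r^{\alpha}t\cos(\pi\alpha)+t^{2}}\,dr .
\]
Here the hypothesis $\alpha<1$ is essential: no residue term appears because $w^{\alpha}=-t$ has no root on the principal branch $|\arg w|<\pi$, since $|\arg w^{\alpha}|<\pi\alpha<\pi$, so the contour may be collapsed freely.

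Next I would expose the structure of this kernel by setting $P=r^{\alpha}+t\cos(\pi\alpha)$ and $Q=t\sin(\pi\alpha)$, so that the denominator becomes $P^{2}+Q^{2}$ and, by the addition formula for $\sin(\pi(\alpha-\mu))$, the numerator becomes $\sin(\pi\mu)\,P-\cos(\pi\mu)\,Q$. Thus
\[
E_{\alpha,\mu}(-t)=\frac{1}{\pi}\int_{0}^{\infty}e^{-r}\,r^{\alpha-\mu}
\left(\sin(\pi\mu)\,\frac{P}{P^{2}+Q^{2}}-\cos(\pi\mu)\,\frac{Q}{P^{2}+Q^{2}}\right)dr,
\]
displaying a Poisson kernel and its conjugate in the variable $r^{\alpha}$. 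Since $|\sin(\pi\mu)|,|\cos(\pi\mu)|\le C_{\mu}$ and $|r^{\alpha-\mu}|=r^{\alpha-\mathrm{Re}\,\mu}$, taking moduli gives
\[
|E_{\alpha,\mu}(-t)|\le\frac{C_{\mu}}{\pi}\int_{0}^{\infty}e^{-r}\,r^{\alpha-\mathrm{Re}\,\mu}
\left(\frac{|P|}{P^{2}+Q^{2}}+\frac{|Q|}{P^{2}+Q^{2}}\right)dr .
\]

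It then remains to bound the right-hand side by $C/(1+t)$ with $C$ independent of $t$ and $\alpha$, which I would do by separating $0\le t\le1$ from $t\ge1$. For $t\ge1$ the factor $e^{-r}$ concentrates the integral at $r=O(1)$, where $r^{\alpha}=O(1)$ is negligible against $t$; there $P^{2}+Q^{2}$ is comparable to $t^{2}$, both kernels are $O(1/t)$, and integrating $e^{-r}r^{\alpha-\mathrm{Re}\,\mu}$ (a Gamma value bounded uniformly for $\alpha\in(0,1)$) produces the bound $C/t\le 2C/(1+t)$. For $0\le t\le1$ it suffices to prove a uniform bound $|E_{\alpha,\mu}(-t)|\le C$; after the substitution $u=r^{\alpha}$ the two kernels $\tfrac{P}{P^{2}+Q^{2}}$ and $\tfrac{Q}{P^{2}+Q^{2}}$ integrate, against the bounded weight, to quantities controlled by $\arctan$ values and hence bounded independently of the parameters.

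The step I expect to be the main obstacle is exactly the uniformity as $\alpha\to1$. In that limit $Q=t\sin(\pi\alpha)\to0$ while $P=r^{\alpha}+t\cos(\pi\alpha)$ vanishes near $r^{\alpha}=t$, so $P^{2}+Q^{2}$ degenerates; a crude estimate such as $|P|/(P^{2}+Q^{2})\le 1/\sqrt{P^{2}+Q^{2}}$ would create a spurious logarithmic blow-up. The remedy is to keep the two kernels intact: written in $u=r^{\alpha}$ they are $\tfrac{u+t\cos\pi\alpha}{(u+t\cos\pi\alpha)^{2}+Q^{2}}$ and $\tfrac{Q}{(u+t\cos\pi\alpha)^{2}+Q^{2}}$, whose $u$-integrals are $\arctan$-type expressions bounded by constants such as $\pi$ no matter how small the width $Q$ becomes — precisely the normalization of a Poisson kernel. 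The conjugate kernel vanishes at the degenerate point $P=0$, so its only growth is at large $u$, which the weight $e^{-r}$ absorbs; and for large $t$ the degeneracy sits at $r\approx t^{1/\alpha}$, where $e^{-r}$ is exponentially small. Combining these observations yields the bound with a constant depending on $\mu$ alone.
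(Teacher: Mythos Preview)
The paper does not supply its own proof of this lemma; it merely cites the estimate from Dzherbashian's monograph (p.~136). So there is nothing to compare your argument against directly. That said, your route --- the Hankel integral for $1/\Gamma$, summation to the contour representation of $E_{\alpha,\mu}$, and collapse onto the negative real axis --- is precisely the classical derivation that Dzherbashian uses to obtain the asymptotic expansion, so you are essentially reconstructing the cited proof rather than proposing an alternative. Your discussion of the degeneration as $\alpha\to 1^{-}$, recognising the Poisson kernel structure so that the $u$--integrals stay bounded by $\arctan$ values regardless of the width $Q$, is the right idea and is the delicate point of the uniformity claim.

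There is, however, a real gap in your handling of general $\mu\in\mathbb{C}$. After collapsing the contour, the integrand carries $e^{-r}r^{\alpha-\mu}$, with modulus $e^{-r}r^{\alpha-\mathrm{Re}\,\mu}$; near $r=0$ the full integrand behaves like $r^{\alpha-\mathrm{Re}\,\mu}$ (the numerator tends to the nonzero constant $-t\sin\pi(\mu-\alpha)$), so the collapsed representation converges only for $\mathrm{Re}\,\mu<\alpha+1$. Your proposed fix ``and then for all $\mu$ by analytic continuation'' does not work: analytic continuation would extend an \emph{identity} between two functions analytic in $\mu$, but once the integral diverges there is no second function to continue, and in any case an inequality does not continue analytically. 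The standard remedy is not to collapse fully onto $(-\infty,0]$ but to retain a small circular arc of radius $\epsilon$ about $w=0$ (the contour $\gamma(\epsilon,\theta)$ used by Dzherbashian and Podlubny); the arc contributes a term that is visibly $O(1/t)$ for large $t$ and bounded for small $t$, while the two rays are estimated exactly as you outline. With that adjustment your scheme goes through. (For the specific values of $\mu$ actually used in this paper the collapsed integral does converge, so the gap is only in the generality the lemma claims.)
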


\begin{lem}\label{Lemma2}
  The following relation holds:
$$
|t^{\alpha-1}E_{\alpha,\mu}(-\lambda t^\alpha)|\leq C_\varepsilon \lambda^{\varepsilon-1}t^{\varepsilon\alpha-1} ,
$$
where $\lambda$ is a positive number and $0<\varepsilon<1$.
\end{lem}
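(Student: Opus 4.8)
The plan is to reduce the claim to the scalar bound already recorded in Lemma \ref{Lemma1}. For $t>0$ (so that $t^{\alpha-1}$ makes sense) I apply Lemma \ref{Lemma1} with nonnegative argument $\lambda t^\alpha$ to obtain
$$
|E_{\alpha,\mu}(-\lambda t^\alpha)| \leq \frac{C}{1+\lambda t^\alpha}.
$$
Multiplying through by $t^{\alpha-1}>0$ then gives
$$
|t^{\alpha-1} E_{\alpha,\mu}(-\lambda t^\alpha)| \leq \frac{C\,t^{\alpha-1}}{1+\lambda t^\alpha},
$$
so it suffices to dominate the right-hand side by $C_\varepsilon \lambda^{\varepsilon-1} t^{\varepsilon\alpha-1}$.

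Next I would extract the target powers of $\lambda$ and $t$ by a bookkeeping of exponents. Multiplying the desired inequality through by the positive quantity $\lambda^{1-\varepsilon} t^{1-\varepsilon\alpha}$ and collecting terms, one checks that $t^{\alpha-1}\cdot t^{1-\varepsilon\alpha}=t^{\alpha(1-\varepsilon)}$, so the claim is equivalent to
$$
\frac{C\,(\lambda t^\alpha)^{1-\varepsilon}}{1+\lambda t^\alpha} \leq C_\varepsilon.
$$
Introducing the single variable $s=\lambda t^\alpha \in [0,\infty)$, the whole statement thus collapses to the assertion that $g(s)=s^{1-\varepsilon}/(1+s)$ is bounded on $[0,\infty)$ by a constant depending only on $\varepsilon$ (after which $C_\varepsilon$ absorbs the factor $C$).

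Finally I would verify this elementary boundedness, which I expect to be the only point requiring any care, though it is routine. Since $0<\varepsilon<1$ the exponent $1-\varepsilon$ lies in $(0,1)$, and I would split into two regimes: for $s\leq 1$ one has $1+s\geq 1$, hence $g(s)\leq s^{1-\varepsilon}\leq 1$; for $s\geq 1$ one has $1+s\geq s$, hence $g(s)\leq s^{-\varepsilon}\leq 1$. Therefore $g(s)\leq 1$ for all $s\geq 0$, and I may take $C_\varepsilon=C$. Substituting back $s=\lambda t^\alpha$ yields exactly the stated estimate $|t^{\alpha-1}E_{\alpha,\mu}(-\lambda t^\alpha)|\leq C_\varepsilon \lambda^{\varepsilon-1}t^{\varepsilon\alpha-1}$. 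There is no genuine analytic obstacle here: the entire argument rests on Lemma \ref{Lemma1} together with the boundedness of the universal function $g$, and the case split makes plain that the constant is in fact independent of $\lambda$ and $t$ as required.
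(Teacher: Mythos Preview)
Your argument is correct. The paper itself does not supply a proof of this lemma; it simply cites the external reference \cite{AshurovFayziev}. Your derivation is the standard one and is almost certainly what that reference contains: apply the scalar bound of Lemma~\ref{Lemma1}, then interpolate by observing that $s^{1-\varepsilon}/(1+s)$ is uniformly bounded on $[0,\infty)$ for $0<\varepsilon<1$. The reduction to the single variable $s=\lambda t^\alpha$ and the two-regime split are clean, and your remark that one may in fact take $C_\varepsilon=C$ (the constant from Lemma~\ref{Lemma1}) is a harmless sharpening. Nothing is missing; your proof is complete and self-contained, whereas the paper defers to a citation.
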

This lemma is proven in \cite{AshurovFayziev}

\begin{lem}\label{Lemma3}
Let $\alpha>0$ and $\lambda \in \mathbb{C}$ , then the following relation holds (see \cite{Kilbas}, p. 78):
$$D_t^\alpha E_{\alpha,1}(\lambda t^\alpha)=\lambda E_{\alpha,1}(\lambda t^\alpha).$$
\end{lem}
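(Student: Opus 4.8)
The plan is to prove the identity by expanding the Mittag-Leffler function into its defining power series and applying the Caputo derivative term by term, using the known action of $D_t^\alpha$ on monomials. Throughout I work with the case $0<\alpha<1$ relevant to this paper, so that $D_t^\alpha=I_t^{1-\alpha}\frac{d}{dt}$ as in the definition given above; the general case $\alpha>0$ follows identically with the integer-order derivative $\frac{d^n}{dt^n}$, $n=\lceil\alpha\rceil$, placed inside $I_t^{n-\alpha}$, and is recorded in \cite{Kilbas}.

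First I would write $E_{\alpha,1}(\lambda t^\alpha)=\sum_{n=0}^\infty \frac{\lambda^n t^{\alpha n}}{\Gamma(\alpha n+1)}$. The key computational ingredient is the Caputo derivative of a power: for $p>0$ one has $D_t^\alpha t^p=\frac{\Gamma(p+1)}{\Gamma(p-\alpha+1)}\,t^{p-\alpha}$, which I would derive in one line from $\frac{d}{dt}t^p=p\,t^{p-1}$ together with the Riemann--Liouville integration rule $I_t^\sigma t^q=\frac{\Gamma(q+1)}{\Gamma(q+\sigma+1)}\,t^{q+\sigma}$, taking $\sigma=1-\alpha$ and $q=p-1$. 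Applied with $p=\alpha n$, this gives $D_t^\alpha t^{\alpha n}=\frac{\Gamma(\alpha n+1)}{\Gamma(\alpha(n-1)+1)}\,t^{\alpha(n-1)}$ for $n\geq 1$, while the $n=0$ term is the constant $1$, whose Caputo derivative vanishes.

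Substituting term by term, the factors $\Gamma(\alpha n+1)$ cancel, leaving $\sum_{n=1}^\infty \frac{\lambda^n}{\Gamma(\alpha(n-1)+1)}\,t^{\alpha(n-1)}$. Re-indexing by $m=n-1$ turns this into $\lambda\sum_{m=0}^\infty \frac{(\lambda t^\alpha)^m}{\Gamma(\alpha m+1)}=\lambda E_{\alpha,1}(\lambda t^\alpha)$, which is exactly the asserted identity.

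The one step requiring justification --- and the main, though mild, obstacle --- is the interchange of the Caputo derivative with the infinite summation. I would justify it by noting that $E_{\alpha,1}$ is an entire function, so the series $\sum_n \lambda^n t^{\alpha n}/\Gamma(\alpha n+1)$ and its termwise $t$-derivative converge uniformly on every compact subinterval of $[0,\infty)$; uniform convergence permits exchanging $\frac{d}{dt}$ with the sum, and the integrability of the kernel $(t-\xi)^{-\alpha}/\Gamma(1-\alpha)$ on compacta (together with dominated convergence) permits exchanging the order-$(1-\alpha)$ integration $I_t^{1-\alpha}$ with the sum. Combining these two interchanges validates the term-by-term computation and completes the proof.
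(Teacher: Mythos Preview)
Your argument is correct. The paper does not supply its own proof of this lemma---it simply cites \cite{Kilbas}, p.~78---and the term-by-term Caputo differentiation of the Mittag--Leffler series that you carry out is precisely the standard derivation recorded there.
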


\begin{lem}\label{Lemma4}
For $Re \gamma >0$, $Re \beta >0$, $\lambda \in \mathbb{R}$, the following relation is valid (see \cite{Gor}, p. 87):
$$
    D_t^\gamma\left( t^{\beta-1}E_{\alpha,\beta}(\lambda t^\alpha)\right)=t^{\beta-\gamma-1} E_{\alpha,\beta-\gamma}(\lambda t^\alpha).
$$
\end{lem}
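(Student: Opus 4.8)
The plan is to apply the Fourier method with respect to the orthonormal eigenbasis $\{v_k\}$ of $A$. Writing $T_k(t)=(u(t),v_k)$ for the Fourier coefficients of a prospective solution and projecting \eqref{eq2.1} onto $v_k$, using $Av_k=\lambda_k v_k$ and self-adjointness of $A$ (so that $(Au(t),v_k)=\lambda_k T_k(t)$ and $D_t^\beta$ commutes with the inner product against the fixed $v_k$), I would reduce the abstract problem to the scalar family
$$
D_t^\beta\big(D_t^\alpha T_k\big)+\lambda_k D_t^\beta T_k=f_k,\qquad T_k(0)=\varphi_k,\qquad D_t^\alpha T_k(+0)=\psi_k.
$$
By linearity of the Caputo derivative this is $D_t^\beta w_k=f_k$ with $w_k:=D_t^\alpha T_k+\lambda_k T_k$; applying $I_t^\beta$ and using the Caputo relation $I_t^\beta D_t^\beta w_k=w_k-w_k(0)$ together with $w_k(0)=\psi_k+\lambda_k\varphi_k$ produces the first-order relaxation equation
$$
D_t^\alpha T_k+\lambda_k T_k=\psi_k+\lambda_k\varphi_k+I_t^\beta f_k,\qquad T_k(0)=\varphi_k.
$$

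I would then solve this by the classical Mittag-Leffler representation $T_k(t)=\varphi_k E_{\alpha,1}(-\lambda_k t^\alpha)+\int_0^t(t-s)^{\alpha-1}E_{\alpha,\alpha}(-\lambda_k(t-s)^\alpha)g_k(s)\,ds$ with $g_k=\psi_k+\lambda_k\varphi_k+I_s^\beta f_k$. The constant part integrates to $(\psi_k+\lambda_k\varphi_k)t^\alpha E_{\alpha,\alpha+1}(-\lambda_k t^\alpha)$ by term-by-term integration of the Mittag-Leffler series, and for the forcing part I would prove the convolution-semigroup identity
$$
\int_0^t(t-s)^{\alpha-1}E_{\alpha,\alpha}(-\lambda(t-s)^\alpha)\,(I_s^\beta f_k)(s)\,ds=\int_0^t(t-\eta)^{\alpha+\beta-1}E_{\alpha,\alpha+\beta}(-\lambda(t-\eta)^\alpha)f_k(\eta)\,d\eta
$$
by Fubini together with the Beta-integral $\int_\eta^t(t-s)^{\alpha n+\alpha-1}(s-\eta)^{\beta-1}ds=B(\alpha n+\alpha,\beta)(t-\eta)^{\alpha n+\alpha+\beta-1}$ summed over $n$. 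This gives exactly \eqref{sol1}.

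The core of the argument is the rigorous justification that \eqref{sol1} converges in the spaces demanded by Definition \ref{defsol} and may be differentiated term by term; here the hypotheses $\varphi\in D(A)$, $\psi\in H$, $f\in C([0,T];D(A^\varepsilon))$ and Lemmas \ref{Lemma1}--\ref{Lemma4} enter. For $Au(t)=\sum_k\lambda_k T_k(t)v_k$ the three summands are controlled by $|\lambda_k E_{\alpha,1}(-\lambda_k t^\alpha)|\le C\lambda_k$ against $\sum\lambda_k^2|\varphi_k|^2<\infty$; by $|\lambda_k t^\alpha E_{\alpha,\alpha+1}(-\lambda_k t^\alpha)|\le C$ (Lemma \ref{Lemma1}) against $\sum|\psi_k|^2$ and $\sum\lambda_k^2|\varphi_k|^2$; and---the delicate one---by writing $(t-\eta)^{\alpha+\beta-1}=(t-\eta)^\beta(t-\eta)^{\alpha-1}$ and invoking Lemma \ref{Lemma2} to obtain $\lambda_k(t-\eta)^{\alpha+\beta-1}|E_{\alpha,\alpha+\beta}(-\lambda_k(t-\eta)^\alpha)|\le C_\varepsilon\lambda_k^{\varepsilon}(t-\eta)^{\varepsilon\alpha+\beta-1}$, after which $\sum_k\lambda_k^{2\varepsilon}|f_k(\eta)|^2=||f(\eta)||_\varepsilon^2$ and the integrability of $(t-\eta)^{\varepsilon\alpha+\beta-1}$ (valid since $\varepsilon\alpha+\beta>0$) close the estimate. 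The same kind of bounds, now using Lemma \ref{Lemma3} for $D_t^\alpha E_{\alpha,1}(-\lambda_k t^\alpha)=-\lambda_k E_{\alpha,1}(-\lambda_k t^\alpha)$ and Lemma \ref{Lemma4} for $D_t^\alpha\big(t^\alpha E_{\alpha,\alpha+1}(-\lambda_k t^\alpha)\big)=E_{\alpha,1}(-\lambda_k t^\alpha)$, show $D_t^\alpha u\in C([0,T];H)$. Applying $D_t^\beta$ to the components via Lemma \ref{Lemma4}, with the scalar identity $D_t^\beta(D_t^\alpha T_k)+\lambda_k D_t^\beta T_k=f_k$, recovers $\sum_k f_k v_k=f$ and, after verifying uniform convergence on compact subsets of $(0,T]$, gives $D_t^\beta(Au),\,D_t^\beta(D_t^\alpha u)\in C((0,T];H)$; the restriction to $(0,T]$ is forced because $D_t^\beta$ applied to the Mittag-Leffler components produces negative powers of $t$ near the origin, which need not remain bounded as $t\to0$.

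The initial conditions then follow by evaluating at $t=0$ with $E_{\alpha,1}(0)=1$ and the vanishing of the $t^\alpha$- and convolution-terms there. Uniqueness follows from completeness of $\{v_k\}$: a solution of the homogeneous problem ($\varphi=\psi=0$, $f=0$) has Fourier coefficients solving the homogeneous scalar problem, whose unique solution is $T_k\equiv0$, so $u\equiv0$. I expect the main obstacle to be exactly the term-by-term action of the nonlocal operators $D_t^\beta(A\,\cdot)$ and $D_t^\beta(D_t^\alpha\,\cdot)$ near $t=0$: there the uniform convergence needed to interchange $D_t^\beta$ with the infinite sum degenerates, and it is precisely the $D(A^\varepsilon)$-regularity of $f$ that supplies the extra powers $\lambda_k^{-\varepsilon}$ restoring summability of the forcing contribution.
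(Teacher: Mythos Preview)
Your proposal does not address the stated lemma at all. Lemma~\ref{Lemma4} is the identity
\[
D_t^\gamma\!\left(t^{\beta-1}E_{\alpha,\beta}(\lambda t^\alpha)\right)=t^{\beta-\gamma-1}E_{\alpha,\beta-\gamma}(\lambda t^\alpha),
\]
which the paper simply quotes from \cite{Gor}, p.~87, without proof. What you have written is instead a full outline of the Fourier-method proof of Theorem~\ref{theorem1} (existence, uniqueness, and the representation formula \eqref{sol1} for the abstract Cauchy problem). None of your argument---the spectral decomposition, the reduction to the scalar relaxation equation, the convolution identity, the convergence estimates---bears on the lemma as stated.

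If the task was genuinely to prove Lemma~\ref{Lemma4}, the expected approach is direct: expand $t^{\beta-1}E_{\alpha,\beta}(\lambda t^\alpha)=\sum_{n\ge0}\lambda^n t^{\alpha n+\beta-1}/\Gamma(\alpha n+\beta)$ and apply the power rule for the Caputo (or Riemann--Liouville) derivative term by term, $D_t^\gamma t^{\alpha n+\beta-1}=\Gamma(\alpha n+\beta)\,t^{\alpha n+\beta-\gamma-1}/\Gamma(\alpha n+\beta-\gamma)$, which immediately reassembles into the claimed series. If instead you intended to submit a proof of Theorem~\ref{theorem1}, your outline is essentially the paper's own proof (same Fourier reduction, same use of Lemmas~\ref{Lemma1}--\ref{Lemma6}), but it is attached to the wrong statement here.
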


\begin{lem}\label{Lemma5}
 Let $f(t) \in C[0,T]$, $0<\alpha<1$ and $0<\beta<1$.Then, the  solution of the following Cauchy problem
\begin{equation}\label{eqaux}
    \begin{cases}
    D_t^\beta(D_t^\alpha y(t))+\lambda D_t^\beta y(t)=f(t),  \quad 0<t\leq T,\\
    y(+0)=\varphi,  \\
    D_t^\alpha y(+0)=\psi
    \end{cases}
\end{equation}   has the form
$$
     y(t)=\varphi E_{\alpha,1}(-\lambda t^\alpha)+(\psi+\lambda \varphi)t^\alpha E_{\alpha,\alpha+1}(-\lambda t^\alpha)+\int_0^t (t-\eta)^{\alpha+\beta-1} E_{\alpha,\alpha+\beta}(-\lambda (t-\eta)^\alpha)f(\eta) d\eta .
$$
\end{lem}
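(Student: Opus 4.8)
The plan is to lower the order of \eqref{eqaux}: applying the fractional integral $I_t^\beta$ to the equation turns it into a fractional relaxation equation of order $\alpha$, whose solution is classical. First I would apply $I_t^\beta$ to both sides of the equation in \eqref{eqaux}. Since $y$ and $D_t^\alpha y$ are absolutely continuous on $[0,T]$ under the regularity built into the notion of a solution, the identity $I_t^\beta D_t^\beta g(t)=g(t)-g(0)$ applies to $g=D_t^\alpha y$ and to $g=y$; using $y(+0)=\varphi$ and $D_t^\alpha y(+0)=\psi$ this gives
$$
D_t^\alpha y(t)+\lambda y(t)=\psi+\lambda\varphi+I_t^\beta f(t),\qquad y(+0)=\varphi .
$$
By Lemma~\ref{Lemma1} and the continuity of $f$, the term $I_t^\beta f$ is continuous on $[0,T]$ and vanishes at $t=0$, so the right-hand side is a bona fide continuous forcing; the step is reversible because $D_t^\beta I_t^\beta h=h$ for continuous $h$. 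Conversely, the reduced problem has a unique continuous solution (its homogeneous counterpart $D_t^\alpha z+\lambda z=0$, $z(+0)=0$ has only $z\equiv0$, e.g. by applying $I_t^\alpha$ and iterating), so showing that \eqref{eqaux} is equivalent to the reduced problem will yield both existence and uniqueness for \eqref{eqaux}.

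Next I would substitute $g(s)=\psi+\lambda\varphi+I_s^\beta f(s)$ into the classical representation of the solution of $D_t^\alpha y+\lambda y=g$, $y(+0)=\varphi$, namely
$$
y(t)=\varphi E_{\alpha,1}(-\lambda t^\alpha)+\int_0^t (t-s)^{\alpha-1}E_{\alpha,\alpha}\!\big(-\lambda (t-s)^\alpha\big)\,g(s)\,ds
$$
(which follows from the Laplace transform, or can be checked directly via Lemmas~\ref{Lemma3}--\ref{Lemma4}). The contribution of the constant $\psi+\lambda\varphi$ is computed by the substitution $r=t-s$ and term-by-term integration of the Mittag-Leffler series, giving $(\psi+\lambda\varphi)\int_0^t r^{\alpha-1}E_{\alpha,\alpha}(-\lambda r^\alpha)\,dr=(\psi+\lambda\varphi)\,t^\alpha E_{\alpha,\alpha+1}(-\lambda t^\alpha)$.

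For the contribution of $I_s^\beta f$ I would write $I_s^\beta f=\Gamma(\beta)^{-1}\,s^{\beta-1}*f$ and use associativity of the convolution (Fubini's theorem, justified by the absolute integrability of the kernels on $[0,T]$) to rewrite it as $\big(\Gamma(\beta)^{-1}t^{\beta-1}\big)*\big(t^{\alpha-1}E_{\alpha,\alpha}(-\lambda t^\alpha)\big)*f=\big(I_t^\beta[\,t^{\alpha-1}E_{\alpha,\alpha}(-\lambda t^\alpha)\,]\big)*f$. A term-by-term computation with the Beta integral yields the semigroup-type identity $I_t^\beta\big[t^{\alpha-1}E_{\alpha,\alpha}(-\lambda t^\alpha)\big]=t^{\alpha+\beta-1}E_{\alpha,\alpha+\beta}(-\lambda t^\alpha)$ (the integration analogue of Lemma~\ref{Lemma4}), so this contribution is $\int_0^t (t-\eta)^{\alpha+\beta-1}E_{\alpha,\alpha+\beta}(-\lambda(t-\eta)^\alpha)f(\eta)\,d\eta$. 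Adding the three pieces gives exactly the claimed formula, and the initial conditions follow at once: $E_{\alpha,1}(0)=1$ together with the vanishing of the two integral terms as $t\to+0$ gives $y(+0)=\varphi$, while $D_t^\alpha y(+0)=\psi$ is read off from the reduced equation at $t=0$.

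I expect the main difficulty to lie in the very first step --- verifying from the hypotheses (the continuity of $D_t^\alpha y$ and of $D_t^\beta(D_t^\alpha y)$) that $D_t^\alpha y$ is absolutely continuous, so that $I_t^\beta D_t^\beta(D_t^\alpha y)=D_t^\alpha y-\psi$ genuinely holds, and in making the reduction precisely reversible so that uniqueness is not lost. The remaining technical points, namely the interchanges of summation and integration in the two Mittag-Leffler identities and the use of Fubini in the convolution rearrangement, are routine given Lemmas~\ref{Lemma1} and~\ref{Lemma2} and the continuity of $f$.
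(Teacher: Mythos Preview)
Your proof is correct and shares its opening move with the paper: both apply $I_t^\beta$ to reduce \eqref{eqaux} to the first-order fractional relaxation problem
$$
D_t^\alpha y(t)+\lambda y(t)=\psi+\lambda\varphi+I_t^\beta f(t),\qquad y(+0)=\varphi.
$$
From here the two arguments diverge. The paper applies a second fractional integral $I_t^\alpha$, obtains a Volterra integral equation, and solves it by Picard iteration, assembling the Mittag--Leffler series by hand from the successive approximations. You instead invoke the closed-form Duhamel representation for $D_t^\alpha y+\lambda y=g$ and then simplify the forcing contribution via convolution associativity and the identity $I_t^\beta\big[t^{\alpha-1}E_{\alpha,\alpha}(-\lambda t^\alpha)\big]=t^{\alpha+\beta-1}E_{\alpha,\alpha+\beta}(-\lambda t^\alpha)$. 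Your route is shorter and more structural --- it isolates exactly where the parameter $\beta$ enters (only through the index shift $\alpha\mapsto\alpha+\beta$ in the kernel) --- whereas the paper's iteration is more self-contained, requiring no prior knowledge of the relaxation-equation solution. Your treatment of uniqueness (arguing that the reduction is reversible because $D_t^\beta I_t^\beta h=h$) is also more explicit than the paper's, which simply exhibits the solution. One small slip: your appeal to Lemma~\ref{Lemma1} for the continuity of $I_t^\beta f$ is a mislabel --- that lemma is the Mittag--Leffler bound, not a mapping property of $I_t^\beta$ --- but the claim itself is elementary and does not affect the argument.
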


\begin{proof}
 Firstly, we apply the operator  $I_t^\beta$ to both sides of the equation and obtain the following equality:
$$D_t^\alpha y(t)-D_t^\alpha y(0)+\lambda y(t)-\lambda y(0)=I_t^\beta f(t). $$
Using the conditions of the problem (\ref{eqaux}), we have
$$D_t^\alpha y(t)=\psi+\lambda \varphi-\lambda y(t) +I_t^\beta f(t).$$
We apply the operator $I_t^\alpha$ to obtain an integral equation for $y(t)$:
$$y(t)=\frac{\psi+\lambda \varphi}{\Gamma(\alpha+1)}t^\alpha+\varphi-\lambda I_t^\alpha y(t)+I_t^{\alpha+\beta}f(t).$$
To solve this integral equation, we use the method of successive approximations (see \cite{Kilbas}, p.137)
$$y_m(t)=\frac{\psi+\lambda \varphi}{\Gamma(\alpha+1)}t^\alpha+\varphi-\lambda I_t^\alpha y_{m-1}(t)+I_t^{\alpha+\beta}f(t).$$
Let us assume that the zeroth approximation is
$$y_0(t)=\frac{\psi+\lambda \varphi}{\Gamma(\alpha+1)}t^\alpha+\varphi.$$   Then the first approximation can be written as follows:
$$y_1(t)=\frac{\psi+\lambda \varphi}{\Gamma(\alpha+1)}t^\alpha+\varphi-\frac{\lambda(\psi+\lambda \varphi)}{\Gamma(2\alpha+1)}t^{2\alpha}-\frac{\lambda \varphi}{\Gamma(\alpha+1)}t^\alpha+I_t^{\alpha+\beta}f(t).$$
Here we obtain the second approximation, which reads as
$$y_2(t)=(\psi+\lambda\varphi)t^\alpha \sum_{k=0}^2 \frac{(-1)^k t^{\alpha k}\lambda^k}{\Gamma(\alpha k+\alpha+1)}+\varphi \sum_{k=0}^2 \frac{(-1)^k t^{\alpha k}\lambda^k}{\Gamma(\alpha k+1)}+$$
$$+\int_0^t\left[\sum_{k=0}^2 \frac{(-1)^k\lambda^k}{\Gamma(\alpha k+\alpha+\beta} (t-\tau)^{\alpha k+\alpha+\beta-1}  f(\tau) d\tau \right].$$
Hence, continuing in this manner, we obtain
$$y_m(t)=(\psi+\lambda\varphi)t^\alpha \sum_{k=0}^{m-1} \frac{(-1)^k t^{\alpha k}\lambda^k}{\Gamma(\alpha k+\alpha+1)}+\varphi \sum_{k=0}^{m-1} \frac{(-1)^k t^{\alpha k}\lambda^k}{\Gamma(\alpha k+1)}+$$
$$+\int_0^t\left[\sum_{k=0}^{m-1} \frac{(-1)^k\lambda^k}{\Gamma(\alpha k+\alpha+\beta)} (t-\tau)^{\alpha k+\alpha+\beta-1}  f(\tau) d\tau \right ].$$ Taking this limit as $m\to \infty$, we have
$$y(t)=(\psi+\lambda\varphi)t^\alpha \sum_{k=0}^\infty \frac{(-1)^k t^{\alpha k}\lambda^k}{\Gamma(\alpha k+\alpha+1)}+\varphi \sum_{k=0}^\infty \frac{(-1)^k t^{\alpha k}\lambda^k}{\Gamma(\alpha k+1)}+$$
$$+\int_0^t\left[\sum_{k=0}^\infty \frac{(-1)^k\lambda^k}{\Gamma(\alpha k+\alpha+\beta)} (t-\tau)^{\alpha k+\alpha+\beta-1}  f(\tau) d\tau \right].$$
According to the definition of the Mittag-Leffler function, the last equality can be written as:
$$y(t)=\varphi E_{\alpha,1}(-\lambda t^\alpha)+[\psi+\lambda \varphi]t^\alpha E_{\alpha,\alpha+1}(-\lambda t^\alpha)+$$
$$+ \int_0^t (t-\tau)^{\alpha+\beta-1} E_{\alpha,\alpha+\beta}(-\lambda (t-\tau)^\alpha)f(\tau)  d\tau. $$
Lemma 2.5 has been proved.
\end{proof}

\begin{lem}\label{Lemma6}
Let $0<\varepsilon<1$ be any fixed number, and $f(t) \in C([0,T];D(A^\varepsilon))$. Then the following estimate holds:
\begin{equation}\label{epsiloneq}
  \sum_{k=1}^\infty \left| \lambda_k\int_0^t (t-\eta)^{\alpha-1} E_{\alpha,\mu}(-\lambda_k (t-\eta)^\alpha)f_k(\eta) d\eta\right|^2\leq C_\varepsilon \max_{t\in[0,T]}||f||_\varepsilon^2.
\end{equation}
\end{lem}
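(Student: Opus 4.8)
The plan is to reduce the claim to a one-dimensional estimate via the pointwise bound of Lemma~\ref{Lemma2}. Multiplying the inequality in that lemma by $\lambda_k$ and making the substitutions $t\mapsto t-\eta$, $\lambda\mapsto\lambda_k$, one obtains
$$
\bigl|\lambda_k (t-\eta)^{\alpha-1} E_{\alpha,\mu}(-\lambda_k(t-\eta)^\alpha)\bigr|\le C_\varepsilon\,\lambda_k^{\varepsilon}\,(t-\eta)^{\varepsilon\alpha-1},\qquad 0\le\eta<t,
$$
so that, estimating the integrand termwise,
$$
\left|\lambda_k\int_0^t (t-\eta)^{\alpha-1} E_{\alpha,\mu}(-\lambda_k(t-\eta)^\alpha) f_k(\eta)\,d\eta\right|\le C_\varepsilon\int_0^t (t-\eta)^{\varepsilon\alpha-1}\,\lambda_k^{\varepsilon}|f_k(\eta)|\,d\eta .
$$
Since $\varepsilon\alpha-1\in(-1,0)$, the kernel $(t-\eta)^{\varepsilon\alpha-1}$ is integrable on $(0,t)$, which is precisely what makes the argument go through.

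The crucial step is to control $\int_0^t(t-\eta)^{\varepsilon\alpha-1}\lambda_k^{\varepsilon}|f_k(\eta)|\,d\eta$ in a way that survives summation over $k$. Simply pulling $\max_{[0,T]}|f_k|$ out of the integral would be fatal, because $\sum_k\lambda_k^{2\varepsilon}\max_{[0,T]}|f_k|^2$ need not be comparable to $\max_{[0,T]}\sum_k\lambda_k^{2\varepsilon}|f_k|^2=\max_{[0,T]}\|f\|_\varepsilon^2$. Instead I would split the weight and apply the Cauchy--Schwarz inequality on $(0,t)$:
$$
\lambda_k^{\varepsilon}\int_0^t (t-\eta)^{\varepsilon\alpha-1}|f_k(\eta)|\,d\eta\le\left(\int_0^t (t-\eta)^{\varepsilon\alpha-1}d\eta\right)^{1/2}\left(\int_0^t (t-\eta)^{\varepsilon\alpha-1}\lambda_k^{2\varepsilon}|f_k(\eta)|^2 d\eta\right)^{1/2}.
$$
Using $\int_0^t(t-\eta)^{\varepsilon\alpha-1}d\eta=t^{\varepsilon\alpha}/(\varepsilon\alpha)\le T^{\varepsilon\alpha}/(\varepsilon\alpha)$, squaring, summing over $k$, and interchanging sum and integral (legitimate since all terms are nonnegative, by Tonelli), one gets
$$
\sum_{k=1}^\infty\left|\lambda_k\int_0^t (t-\eta)^{\alpha-1} E_{\alpha,\mu}(-\lambda_k(t-\eta)^\alpha) f_k(\eta)\,d\eta\right|^2\le C_\varepsilon^2\,\frac{T^{\varepsilon\alpha}}{\varepsilon\alpha}\int_0^t (t-\eta)^{\varepsilon\alpha-1}\Bigl(\sum_{k=1}^\infty\lambda_k^{2\varepsilon}|f_k(\eta)|^2\Bigr)d\eta .
$$

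To finish, I would recognize the inner sum as $\|f(\eta)\|_\varepsilon^2$, bound it by $\max_{t\in[0,T]}\|f\|_\varepsilon^2$ (finite because $f\in C([0,T];D(A^\varepsilon))$), pull this constant out of the integral, and evaluate the remaining $\int_0^t(t-\eta)^{\varepsilon\alpha-1}d\eta\le T^{\varepsilon\alpha}/(\varepsilon\alpha)$ once more. This gives \eqref{epsiloneq} with the constant $C_\varepsilon^2\,(T^{\varepsilon\alpha}/(\varepsilon\alpha))^2$, which depends only on $\varepsilon$ and the fixed data $\alpha,T$, as required. The only genuinely delicate point is the order of operations --- applying Cauchy--Schwarz before rather than after summing in $k$, combined with the integrability of $(t-\eta)^{\varepsilon\alpha-1}$; once these are arranged, the rest is routine bookkeeping with Beta-type integrals.
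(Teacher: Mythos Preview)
Your argument is correct and follows the same overall outline as the paper: apply Lemma~\ref{Lemma2} to reduce to the kernel $(t-\eta)^{\varepsilon\alpha-1}$, then pass from a sum of squared integrals to an integral of $\|f(\eta)\|_\varepsilon^2$. The only difference is in how that passage is executed. The paper invokes the generalized Minkowski inequality directly,
\[
\sum_{k}\Bigl(\int_0^t (t-\eta)^{\varepsilon\alpha-1}\lambda_k^\varepsilon|f_k(\eta)|\,d\eta\Bigr)^2
\le\Bigl(\int_0^t (t-\eta)^{\varepsilon\alpha-1}\Bigl(\sum_{k}\lambda_k^{2\varepsilon}|f_k(\eta)|^2\Bigr)^{1/2}d\eta\Bigr)^2,
\]
and then bounds the integrand by $\max_{[0,T]}\|f\|_\varepsilon$; you instead apply Cauchy--Schwarz in $\eta$ with weight $(t-\eta)^{\varepsilon\alpha-1}$ coordinate by coordinate, square, sum, and use Tonelli. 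Both routes land on the same estimate with the same constant (up to the harmless discrepancy that the paper records $T^{\varepsilon\alpha}$ rather than $(T^{\varepsilon\alpha}/(\varepsilon\alpha))^2$). Your version is slightly more elementary in that it avoids naming Minkowski's integral inequality, at the cost of one extra line; the paper's version is a touch cleaner. Neither buys anything the other does not.
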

\begin{proof}
 By using Lemma \ref{Lemma2} for any fixed number $0<\varepsilon<1$, we take
$$
\sum_{k=1}^n \lambda_k^2 \left|\int_0^t (t-\eta)^{\alpha-1}E_{\alpha,\mu}(-\lambda_k (t-\eta)^\alpha)f_k(\eta) d\eta\right|^2\leq
$$
$$\leq C^1_\varepsilon\sum_{k=1}^n  \left[\int_0^t (t-\eta)^{\varepsilon\alpha-1}\lambda_k^\varepsilon |f_k(\eta)| d\eta\right]^2.$$
Using the generalized Minkowski inequality, we have
$$C^1_\varepsilon\sum_{k=1}^n  \left[\int_0^t (t-\eta)^{\varepsilon\alpha-1}\lambda_k^\varepsilon |f_k(\eta)| d\eta\right]^2\leq C^1_\varepsilon\left(\int_0^t (t-\eta)^{\varepsilon \alpha-1}\bigg(\sum_{k=1}^n \lambda_k^{2\varepsilon} |f_k(\eta)|^2\bigg)^{\frac{1}{2}} d\eta\right)^2
$$
$$
\leq C^1_\varepsilon T^{\varepsilon\alpha}\max_{t\in[0,T]} ||f||_\varepsilon^2=C_\varepsilon \max_{t \in[0;T]}||f||_\varepsilon^2.
$$
Taking the limit as $n \rightarrow \infty$, we obtain the estimate \eqref{epsiloneq}.

Lemma \ref{Lemma6} has been proved.
\end{proof}

\section{Proof of Theorem \ref{theorem1}}
Assume that a solution to problem \eqref{eq2.1} exists. Then, due to the completeness of the system $\{v_k\}$ in H, the arbitrary solution can be written in the form:
\begin{equation}\label{def}
u(t)=\sum_{k=1}^\infty T_k(t)v_k,
\end{equation}
where  $T_k(t)$ are the Fourier coefficients of the function $u(t)$. Then, by virtue  of \eqref{def}, we obtain the following problem:
\begin{equation}\label{eq3}
    \begin{cases}
    D_t^\beta(D_t^\alpha T_k(t))+\lambda_k D_t^\beta T_k(t)=f_k(t), \\
    T_k(0)=\varphi_k,  \\
    D_t^\alpha T_k(0)=\psi_k.
    \end{cases}
\end{equation}
By Lemma \ref{Lemma5}, the solution of problem \eqref{eq3} is given by
$$
 T_k(t)= \varphi_k E_{\alpha,1}(-\lambda_k t^\alpha)+(\psi_k+\lambda_k \varphi_k)t^\alpha E_{\alpha,\alpha+1}(-\lambda_k t^\alpha)+
$$
\begin{equation}\label{sol2}
    +\int_0^t (t-\eta)^{\alpha+\beta-1} E_{\alpha,\alpha+\beta}(-\lambda_k (t-\eta)^\alpha)f_k(\eta) d\eta.
\end{equation}
Thus, according to the equalities \eqref{def} and \eqref{sol2}, we find the formal solution of the problem \eqref{eq2.1} in the form \eqref{sol1}.

To prove the uniqueness of the solution, we use the standard technique, that is, the solution of problem \eqref{eq3} with the homogeneous condition (i.e, $\varphi_k=0$, $\psi_k=0$ and $f_k(t)=0$)  is identically zero. Then it follows that $T_k(t)\equiv0$, for all $k\geq1$. According to the equality  \eqref{def} and the completeness of the system $\{v_k\}$, we obtain $u(t)\equiv0$.

We now verify that the formal solution satisfies the conditions of Definition \ref{defsol}.
Let $S_j(t)$  be the partial sum of the series in \eqref{sol1}
$$
S_j(t)=\sum_{k=1}^j \bigg[\varphi_k E_{\alpha,1}(-\lambda_k t^\alpha)+(\psi_k+\lambda_k \varphi_k)t^\alpha E_{\alpha,\alpha+1}(-\lambda_k t^\alpha)+
$$
$$
    +\int_0^t (t-\eta)^{\alpha+\beta-1} E_{\alpha,\alpha+\beta}(-\lambda_k (t-\eta)^\alpha)f_k(\eta) d\eta\bigg]  v_k .
$$
Then, by applying the operator $A$ on the partial sum $S_j(t)$,we have
$$    A S_j(t)=\sum_{k=1}^j \bigg[\varphi_k E_{\alpha,1}(-\lambda_k t^\alpha)+(\psi_k+\lambda_k \varphi_k)t^\alpha E_{\alpha,\alpha+1}(-\lambda_k t^\alpha)+$$
\begin{equation}\label{Asj}
    +\int_0^t (t-\eta)^{\alpha+\beta-1} E_{\alpha,\alpha+\beta}(-\lambda_k (t-\eta)^\alpha)f_k(\eta) d\eta\bigg] \lambda_k v_k .
\end{equation}
Using Parseval's identity, we can obtain
$$
||A S_j(t)||^2=\sum_{k=1}^j \lambda_k^2 \bigg|\varphi_k E_{\alpha,1}(-\lambda_k t^\alpha)+(\psi_k+\lambda_k \varphi_k)t^\alpha E_{\alpha,\alpha+1}(-\lambda_k t^\alpha)+
$$
$$
+\int_0^t (t-\eta)^{\alpha+\beta-1}E_{\alpha,\alpha+\beta}(-\lambda_k(t-\eta)^\alpha)f_k(\eta) d\eta\bigg|^2.
$$
Now, we split the above sum into three terms concerning $\varphi_k,\psi_k$, and $f_k(\eta)$, and by using the inequality
$(a+b+c)^2 \leq 3(a^2+b^2+c^2)$, we obtain:
$$   ||A S_j(t)||^2\leq \sum_{k=1}^j \lambda_k^2|\varphi_k|^2 \big|\left( E_{\alpha,1}(-\lambda_k t^\alpha)+\lambda_k  t^\alpha E_{\alpha,\alpha+1}(-\lambda_k t^\alpha) \right)\big|^2+\sum_{k=1}^j \lambda_k^2 \big|\psi_k t^\alpha E_{\alpha,\alpha+1}(-\lambda_k t^\alpha)\big|^2+
$$
$$
+\sum_{k=1}^j \lambda_k^2 \bigg|\int_0^t (t-\eta)^{\alpha+\beta-1}E_{\alpha,\alpha+\beta}(-\lambda_k(t-\eta)^\alpha)f_k(\eta) d\eta\bigg|^2,
$$
 or $$||A S_j(t)||^2\leq AS_j^1+AS_j^2+AS_j^3.$$

In the first sum, we split it into two terms:
$$
AS_j^1=\sum_{k=1}^j\lambda_k^2|\varphi_k|^2 \left|\left( E_{\alpha,1}(-\lambda_k t^\alpha)+\lambda_k  t^\alpha E_{\alpha,\alpha+1}(-\lambda_k t^\alpha) \right)\right|^2\leq AS_j^{11}+AS_j^{12},
$$where $$AS_j^{11}=\sum_{k=1}^j\lambda_k^2 |\varphi_k|^2 |E_{\alpha,1}(-\lambda_k t^\alpha)|^2,$$ $$AS_j^{12}=\sum_{k=1}^j\lambda_k^4|\varphi_k|^2|t^\alpha E_{\alpha,\alpha+1}(-\lambda_k t^\alpha)|^2.$$
By applying Lemma \ref{Lemma1} and the inequality $\lambda_k t^\alpha(1+\lambda_k t^\alpha)^{-1}<1$ for $AS_j^{11}$ and $AS_j^{12}$, we have the following  estimates:
$$AS_j^{11}\leq C\sum_{k=1}^j|\varphi_k|^2,
$$
$$AS_j^{12}\leq C\sum_{k=1}^j\lambda_k^2|\varphi_k|^2.$$ Thus, we have the following estimate for $AS_j^1$
$$AS_j^1\leq C\sum_{k=1}^j \lambda_k^2 |\varphi_k|^2, \quad t>0.
$$
Similarly, using Lemma \ref{Lemma1} and the inequality $\lambda_k t^\alpha(1+\lambda_k t^\alpha)^{-1}<1$ for the second sum, we have
$$
AS_j^2=\sum_{n=1}^j \lambda_k^2 \big|\psi_k t^\alpha E_{\alpha,\alpha+1}(-\lambda_k t^\alpha) \big|^2\leq C\sum_{n=1}^j |\psi_k|^2, \quad t>0.
$$
Let us estimate the sum $AS_j^3$. According to Lemma \ref{Lemma6},
we have:
$$
AS_j^3=\sum_{n=1}^j \lambda_k^2 \left|\int_0^t (t-\eta)^{\alpha+\beta-1}E_{\alpha,\alpha+\beta}(-\lambda_k (t-\eta)^\alpha)f_k(\eta) d\eta\right|^2 \leq C_\varepsilon \max_{t\in[0,T]} ||f||_\varepsilon^2.
$$

Thus, if $\varphi \in D(A)$, $\psi \in H$ and $f(t)\in C([0,T];D(A^\varepsilon))$, then from estimates of $AS_i^j$ we obtain $Au(t)\in C((0,T];H)$ .

From the above, we can prove uniform convergence of the Fourier series corresponding to the function $u(t)$. If  $\varphi \in H$, $\psi \in H$ and $f(t)\in C\left([0,T];D(A^\varepsilon)\right)$, then $u(t) \in AC([0,T];H)$.

Next, we prove  that  $D_t^\beta (Au(t))\in C((0,T];H)$. Let us apply $D_t^\beta$ term by term to series \eqref{Asj}. By applying Lemma \ref{Lemma3} and Lemma \ref{Lemma4}, we obtain
$$
||D_t^\beta(AS_j)||^2=\bigg|\bigg|\sum_{k=1}^j\bigg[\varphi_k(-\lambda_k)E_{\alpha,1}(-\lambda_k t^\alpha)+(\psi_k+\lambda_k\varphi_k)t^{\alpha-\beta}E_{\alpha,\alpha-\beta+1}(-\lambda_k t^\alpha)+
$$
$$
+D_t^{\beta}\left(\int_0^t (t-\eta)^{\alpha+\beta-1}E_{\alpha,\alpha+\beta}(-\lambda_k(t-\eta)^\alpha)f_k(\eta) d\eta\right)\bigg]\lambda_k v_k\bigg|\bigg|^2.
$$
 According to Parseval's identity, we have the following expression:
$$
||D_t^\beta(AS_j(t))||^2=\sum_{k=1}^j\lambda_k^2 \bigg|\varphi_k(-\lambda_k)E_{\alpha,1}(-\lambda_k t^\alpha)+(\psi_k+\lambda_k\varphi_k)t^{\alpha-\beta}E_{\alpha,\alpha-\beta+1}(-\lambda_k t^\alpha)+
$$
\begin{equation} \label{aj1}
 +D_t^{\beta}\left(\int_0^t (t-\eta)^{\alpha+\beta-1}E_{\alpha,\alpha+\beta}(-\lambda_k(t-\eta)^\alpha)f_k(\eta) d\eta\right)\bigg|^2.
\end{equation}
We split this sum \eqref{aj1} into three parts and estimate each term separately:
$$
||D_t^\beta(AS_j(t))||^2\leq \sum_{k=1}^j\lambda_k^2 \bigg|\varphi_k(-\lambda_k)E_{\alpha,1}(-\lambda_k t^\alpha)+\lambda_k\varphi_k t^{\alpha-\beta}E_{\alpha,\alpha-\beta+1}(-\lambda_k t^\alpha)\bigg|^2+
$$
$$
 +\sum_{k=1}^j \lambda_k^2 \bigg|\psi_k t^{\alpha-\beta}E_{\alpha,\alpha-\beta+1}(-\lambda_k t^\alpha)\bigg|^2+\sum_{k=1}^j \lambda_k^2\bigg| D_t^{\beta}\left(\int_0^t (t-\eta)^{\alpha+\beta-1}E_{\alpha,\alpha+\beta}(-\lambda_k(t-\eta)^\alpha)f_k(\eta) d\eta\right)\bigg|^2=
$$
\begin{equation}\label{yig2}
    =K_1(t)+K_2(t)+K_3(t),
\end{equation}
where
$$K_1(t)=\sum_{k=1}^j \lambda_k^2 \left|\varphi_k(-\lambda_k)E_{\alpha,1}(-\lambda_k t^\alpha)+\varphi_k \lambda_k t^{\alpha-\beta}E_{\alpha,\alpha-\beta+1}(-\lambda_k t^\alpha)\right|^2,$$
$$K_2(t)=\sum_{k=1}^j \lambda_k^2\left|\psi_k t^{\alpha-\beta} E_{\alpha,\alpha-\beta+1}(-\lambda_k t^\alpha)\right|^2,$$
$$K_3(t)=\sum_{k=1}^j \lambda_k^2\bigg| D_t^{\beta}\left(\int_0^t (t-\eta)^{\alpha+\beta-1}E_{\alpha,\alpha+\beta}(-\lambda_k(t-\eta)^\alpha)f_k(\eta) d\eta\right)\bigg|^2.$$
For $K_1(t)$, using Lemma \ref{Lemma1} and $\lambda_k t^{\alpha-\gamma}(1+\lambda_k t^\alpha)^{-1}<t^{-\gamma}$, we arrive at
$$
K_1(t) \leq \left[\frac{C}{t^{2\alpha}}+\frac{C}{t^{2\beta}}\right]\sum_{k=1}^j \lambda_k^2 |\varphi_k|^2 , \quad t>0.
$$
For $K_2(t)$, by applying Lemma \ref{Lemma1} and the inequality $\lambda_k t^{\alpha-\gamma}(1+\lambda_k t^\alpha)^{-1}<t^{-\gamma}$, we have
$$
K_2(t)\leq \frac{C}{t^{2\beta}}\sum_{k=1}^j |\psi_k|^2 , \quad t>0.$$
We need to prove uniform convergence of  $K_3(t)$. Firstly, we denote the integral of $K_3(t)$ by $F(t)$ and  integrate by parts
$$
F(t)=\int_0^t (t-\eta)^{\alpha+\beta-1}E_{\alpha,\alpha+\beta}(-\lambda_k(t-\eta)^\alpha)f_k(\eta) d\eta=
$$
$$
=-f_k(\eta)(t-\eta)^{\alpha+\beta}E_{\alpha,\alpha+\beta+1}(-\lambda_k(t-\eta)^\alpha)\bigg|_0^t+\int_0^t (t-\eta)^{\alpha+\beta}E_{\alpha,\alpha+\beta+1}(-\lambda_k(t-\eta)^\alpha)f_k'(\eta) d\eta=$$
$$
=f_k(0)t^{\alpha+\beta}E_{\alpha,\alpha+\beta+1}(-\lambda_k t^\alpha)+\int_0^t (t-\eta)^{\alpha+\beta}E_{\alpha,\alpha+\beta+1}(-\lambda_k(t-\eta)^\alpha)f_k'(\eta) d\eta .
$$
Then, we compute the Caputo fractional derivative of order $\beta$ according to its definition:
$$
D_t^\beta F(t)=\frac{1}{\Gamma(1-\beta)}\int_0^t \frac{F'(s)}{(t-s)^\beta} ds=\frac{f_k(0)}{\Gamma(1-\beta)} \int_0^t \frac{(s^{\alpha+\beta}E_{\alpha,\alpha+\beta+1}(-\lambda_ks^\alpha))'_s}{(t-s)^\beta} ds+
$$
$$
+\frac{1}{\Gamma(1-\beta)}\int_0^t\int_0^s\frac{f_k'(\eta)}{(t-s)^\beta} \frac{\partial}{\partial s}\bigg[(s-\eta)^{\alpha+\beta}E_{\alpha,\alpha+\beta+1}(-\lambda_k(s-\eta)^\alpha)\bigg] d\eta ds=
$$
$$
=f_k(0)t^\alpha E_{\alpha,\alpha+1}(-\lambda_k t^\alpha)+\int_0^t f_k'(\eta)(t-\eta)^\alpha E_{\alpha,\alpha+1}(-\lambda_k(t-\eta)^\alpha) d\eta .
$$
Using integration by parts, we have
$$D_t^\beta F(t)=f_k(0)t^\alpha E_{\alpha,\alpha+1}(-\lambda_k t^\alpha)+f_k(\eta)(t-\eta)^\alpha E_{\alpha,\alpha+1}(-\lambda_k(t-\eta)^\alpha)\bigg|_0^t+$$
$$+\int_0^tf_k(\eta)(t-\eta)^{\alpha-1}E_{\alpha,\alpha}(-\lambda_k(t-\eta)^\alpha)d\eta=$$
$$=f_k(0)t^\alpha E_{\alpha,\alpha+1}(-\lambda_k t^\alpha)-f_k(0)t^\alpha E_{\alpha,\alpha+1}(-\lambda_k t^\alpha)+\int_0^tf_k(\eta)(t-\eta)^{\alpha-1}E_{\alpha,\alpha}(-\lambda_k(t-\eta)^\alpha)d\eta=$$$$=\int_0^tf_k(\eta)(t-\eta)^{\alpha-1}E_{\alpha,\alpha}(-\lambda_k(t-\eta)^\alpha)d\eta.$$
Therefore, $K_3(t)$  takes the following form:
$$
K_3(t)=\sum_{k=1}^j \lambda_k^2\bigg|\int_0^tf_k(\eta)(t-\eta)^{\alpha-1}E_{\alpha,\alpha}(-\lambda_k(t-\eta)^\alpha)d\eta\bigg|^2.
$$
Applying Lemma \ref{Lemma6}, we  get
$$K_3(t)=\sum_{k=1}^j \lambda_k^2\bigg|\int_0^tf_k(\eta)(t-\eta)^{\alpha-1}E_{\alpha,\alpha}(-\lambda_k(t-\eta)^\alpha)d\eta\bigg|^2\leq C_\varepsilon \max_{t\in[0,T]}||f||_\varepsilon^2$$
Thus, if $\varphi\in D(A)$, $\psi \in H$ and $f(t) \in C([0,T];D(A^\varepsilon))$, then from \eqref{yig2} and estimates $K_1,K_2,K_3$ , we obtain $D_t^\beta(Au(t)) \in C((0,T];H).$

The equation of problem \eqref{eq2.1} can be written as $D_t^\beta(D_t^\alpha u(t))=f(t)-D_t^\beta(Au(t))$. Therefore, from the above reasoning, we have $D_t^\beta(D_t^\alpha u(t))\in C((0,T];H)$.

Likewise, using Lemma \ref{Lemma3}, Lemma \ref{Lemma4} and the definition of the Caputo fractional derivative, if we compute $D_t^\alpha u(t)$ in the same manner as $D_t^\beta(Au(t))$ was calculated, we obtain the following expression:
$$D_t^\alpha S_j(t)=\sum_{k=1}^j \bigg[-\varphi_k \lambda_k E_{\alpha,1}(-\lambda_k t^\alpha)+[\psi_k+\lambda_k \varphi_k]E_{\alpha,1}(-\lambda_k t^\alpha)+$$
$$+\int_0^t f_k(\eta)(t-\eta)^{\beta-1} E_{\alpha,\beta}(-\lambda_k(t-\eta)^\alpha) d\eta \bigg]v_k,$$
or
\begin{equation}\label{deralpha}
D_t^\alpha S_j(t)=\sum_{k=1}^j\bigg[\psi_kE_{\alpha,1}(-\lambda_k t^\alpha)+\int_0^t f_k(\eta)(t-\eta)^{\beta-1} E_{\alpha,\beta}(-\lambda_k(t-\eta)^\alpha) d\eta  \bigg]v_k.
\end{equation}
Using Parseval's identity, we have the following expression
$$||D_t^\alpha S_j(t)||^2=\sum_{k=1}^j\bigg|\psi_kE_{\alpha,1}(-\lambda_k t^\alpha)+\int_0^t f_k(\eta)(t-\eta)^{\beta-1} E_{\alpha,\beta}(-\lambda_k(t-\eta)^\alpha) d\eta \bigg|^2.$$
From this, we have
$$||D_t^\alpha S_j(t)||^2\leq B_1(t)+B_2(t),$$
where $$B_1(t)=\sum_{k=1}^j|\psi_k|^2|E_{\alpha,1}(-\lambda_kt^\alpha)|^2,$$
$$B_2(t)=\sum_{k=1}^j\bigg|\int_0^t f_k(\eta)(t-\eta)^{\beta-1} E_{\alpha,\beta}(-\lambda_k(t-\eta)^\alpha) d\eta \bigg|^2.$$
Now, using $|E_{\alpha,1}(-z)|\leq C$ (see, \cite{Gor}, p.62), we have the following estimate for $B_1(t)$
$$B_1(t)\leq C\sum_{k=1}^j|\psi_k|^2,$$ where $C>0$ is constant.
According to $|E_{\alpha,\beta}(-z)|\leq 1$ (see \cite{Gor}) and the generalized Minkowski inequality, we obtain the following estimate for $B_2(t)$:
$$B_2(t)\leq\sum_{k=1}^j\bigg|\int_0^t f_k(\eta)(t-\eta)^{\beta-1}  d\eta \bigg|^2\leq\bigg(\int_0^t \bigg(\sum_{k=1}^j|f_k(\eta)|^2\bigg)^{\frac{1}{2}}(t-\eta)^{\beta-1}d\eta\bigg)^2\leq CT^{2\beta}\max_{t\in[0,T]}||f||^2.$$
Hence, if $\psi\in H$ and $f(t)\in C([0,T];H)$, then we have $D_t^\alpha u(t) \in C([0,T];H)$.

Now, by considering the case $t=+0$ in the equalities  \eqref{sol1} and \eqref{deralpha}, we can verify that the solution $u(t)$ satisfies the conditions of the problem \ref{eq2.1}
$$
u(t)\bigg|_{t=+0}=\sum_{k=1}^\infty \varphi_k v_k=\varphi,
$$
$$
D^\alpha u(t)\bigg|_{t=+0}=\sum_{k=1}^\infty \psi_kv_k=\psi.
$$

Theorem \ref{theorem1} has been proved.
\hfill$\Box$ \

\section{Acknowledgement.}

The authors deeply thank Professor R.R. Ashurov for a useful discussion of the article.


\end{document}